\newtheorem{observation}[theorem]{Observation}
\begin{document}

\begin{frontmatter}[classification=text]


\author[bowen]{Matt Bowen\thanks{Supported by Ben Green's Simons Investigator Grant number 376201.}}

\begin{abstract}
We prove a theorem ensuring that the compositions of certain Ramsey families are still Ramsey.  As an application, we show that in any finite coloring of $\mathbb{N}$ there is an infinite set $A$ and an as large as desired finite set $B$ with $(A+B)\cup (AB)$ monochromatic, addressing a problem of Kra, Moreira, Richter, and Robertson.  In fact, we prove an iterated version of this result, which ensures the existence of monochromatic patterns such as $\{a\circ_1 (b \circ_2 c): \circ_i\in \{+,\cdot\}\}, $ generalizing a Ramsey theorem of Bergelson and Moreira that was previously only known to hold for colorings of $\mathbb{Q}$ rather than colorings of $\mathbb{N}$.  Our main new technique is an extension of the color focusing method that involves trees rather than sequences.
\end{abstract}
\end{frontmatter}


\section{Introduction}

Ramsey theory on $\mathbb{N}$ is centered around characterizing the structures of the following form.

\begin{definition}
\normalfont We say a family $\mathcal{B}\subseteq \mathcal{P}(\mathbb{N})$ is \textbf{Ramsey} if for any finite coloring $c:\mathbb{N}\rightarrow [r]$ there is some $B\in \mathcal{B}$ such that $c$ is constant on $B$. 
\end{definition}

The \textbf{linear Rado families}, i.e. the Ramsey families $\mathcal{B}$ that are generated by the set of solutions to a finite system of linear equations, are completely characterized by Rado's theorem \cite{rado1933studien}.  Moreover, given any such family we can obtain a \textbf{geometric Rado family} by composing a given coloring with the homomorphism $n\mapsto 2^n$. 
For example, Schur's theorem tells us that $\{\{x,y,z\}:x+y=z\}$ is a linear Rado family, which in turn implies that $\{\{x,y,z\}:xy=z\}$ is a geometric Rado family.  We will sometimes simply write, for example, that $\{x,y,x+y\}$ is Ramsey, a slight abuse of notation.

In contrast to the linear and geometric case, characterizations of other Ramsey families are still quite elusive, and determining if even some of the simplest non-linear families are Ramsey has been the subject of much recent study; see, e.g., \cite{alweiss2023monochromatic,alweiss2024monochromatic,chow2021rado,di2018ramsey,di2022monochromatic,moreira2017monochromatic,prendiville2021counting,sahasrabudhe2018exponential,sahasrabudhe2018monochromatic}, for some recent results of this type.  Here we further this line of work by showing that certain non-linear compositions of Ramsey families are still Ramsey.  In the below, by $\binom{\mathbb{N}}{m}$ we mean the family of all $m$-element subsets of $\mathbb{N},$ while $\binom{\mathbb{N}}{\infty}$ is the family of all infinite subsets of $\mathbb{N}.$

\begin{theorem}\label{thm:main}
Let $\mathbb{T}$ be the smallest collection of families of subsets of $\mathbb{N}$ such that:

\begin{enumerate}
    \item If $\mathcal{B}$ is a linear or geometric Rado family, then $\mathcal{B}\in\mathbb{T}$
    \item If $\mathcal{B}\in\mathbb{T},$ then for any finite set $P$ of integral polynomials and $m\in\mathbb{N}$ the family $$\{\{a,ab, a+p(b):a\in A, b\in B, p\in P\}: A\in \binom{\mathbb{N}}{m},B\in\mathcal{B}\}\in \mathbb{T}.$$
    
    \vspace{3mm}
    
\end{enumerate}

 If $\mathcal{B}\in \mathbb{T},$ then for any finite set $P$ of integral polynomials the family 

 $$\{\{a,ab, a+p(b):a\in A, b\in B, p\in P\}: A\in \binom{\mathbb{N}}{\infty},B\in\mathcal{B}\}$$

 is Ramsey.

\end{theorem}

\vspace{2mm}

Note in the above that the set $A$ in the conclusion is infinite, but we must restrict to finite sets $A$ when ensuring inclusion in $\mathbb{T}$.

This theorem is somewhat abstract, so we discuss some concrete applications.

For the first, consider the family $\mathcal{B}=\binom{\mathbb{N}}{m}$ consisting of all $m$ element subsets of $\mathbb{N}.$  This is a linear Rado family by the pigeonhole principle, and so applying Theorem \ref{thm:main} with $\mathcal{B}$ and the identity polynomial $n\mapsto n$ we obtain the following:

\begin{corollary}\label{cor:kmrr}
In any finite coloring of $\mathbb{N}$ there is an infinite set $A$ and a set $B\in\binom{\mathbb{N}}{m}$ such that $(A+B)\cup (AB)$ is monochromatic. 
\end{corollary}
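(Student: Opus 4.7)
The plan is a direct application of \Cref{thm:main}. First I would note that $\mathcal{B} = \binom{\mathbb{N}}{k}$ is a linear Rado family: any finite coloring of $\mathbb{N}$ contains $k$ monochromatic elements by the pigeonhole principle (and $\binom{\mathbb{N}}{k}$ is the solution set of the trivial linear system in $k$ variables). Hence $\binom{\mathbb{N}}{k} \in \mathbb{T}$ by clause (1) of \Cref{thm:main}.

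Next, I would invoke clause (2) of the theorem with this $\mathcal{B}$, the singleton polynomial set $P = \{p\}$ where $p(a) = a$, and any choice of the new size parameter (say $k' = 1$). The resulting family
\[
\mathcal{F} = \left\{ \{x,\, xa,\, x+a : a \in A,\, x \in X\} : X \in \binom{\mathbb{N}}{k'},\, A \in \binom{\mathbb{N}}{k} \right\}
\]
then lies in $\mathbb{T}$ and is therefore Ramsey. The \emph{moreover} conclusion of \Cref{thm:main} further ensures that, for any finite coloring, the set of $x$'s participating in the monochromatic witness can be taken to be infinite.

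To translate this back to the statement of the corollary, let $c$ be any finite coloring of $\mathbb{N}$. Then there exist a $k$-element set $B \in \binom{\mathbb{N}}{k}$ (playing the role of $A$ in the theorem) and an infinite set $A \subseteq \mathbb{N}$ (the set of witnessing $x$'s) such that
\[
\{x,\, xa,\, x+a : x \in A,\, a \in B\}
\]
is $c$-monochromatic. Every element of $A + B$ appears in this set as some $x + a$, and every element of $A \cdot B$ appears as some $xa$, so $(A+B) \cup (AB)$ is $c$-monochromatic, which is precisely the conclusion of the corollary.

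The deduction treats \Cref{thm:main} as a black box, so no real obstacle arises beyond matching notation; the substantive work -- in particular, the extension of the color focusing method to trees -- lies entirely in the proof of \Cref{thm:main}.
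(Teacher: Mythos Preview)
Your proposal is correct and matches the paper's own deduction essentially verbatim: the paper also observes that $\binom{\mathbb{N}}{k}$ is a linear Rado family by the pigeonhole principle and then applies \Cref{thm:main} with the single polynomial $p(n)=n$, invoking the ``moreover'' clause to get an infinite set of $x$'s.
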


The $|A|=|B|=1$ consequence of this corollary is Moreira's theorem \cite{moreira2017monochromatic}. Recently, in \cite{kra2022infinite} Question 8.4, Kra, Moreira, Richter, and Robertson asked if the above result is true when $B$ is also required to be infinite and stated that even the $|A|=|B|=2$ case seemed out of reach.

We can iterate the deduction of Corollary \ref{cor:kmrr} to generalize it to more variables.  For instance, since the family $\binom{\mathbb{N}}{m}\in \mathbb{T}$, by Theorem \ref{thm:main} (2) we see that for any $m\in \mathbb{N}$ the family $\{B\cup (BC) \cup (B+C): B,C\in\binom{\mathbb{N}}{m}\}\in \mathbb{T}.$ The conclusion of Theorem \ref{thm:main} now ensures that there is an infinite set $A$ and $m$-element sets $B,C$ such that the following set is monochromatic: 

\begin{equation}\label{example}
(A+B+C)\cup (A+(BC))\cup (A(B+C))\cup (ABC). 
\end{equation}

 Doing this repeatedly, we obtain:

\begin{corollary}\label{cor iterated}
In any finite coloring of $\mathbb{N}$ there are sets $A_1,...,A_n\in\binom{\mathbb{N}}{m}$ such that $$\{a_1\circ_1 (a_2\circ_2(...\circ_{n-2} (a_{n-1}\circ_{n-1} a_n)...): a_i\in A_i, \circ_i\in \{+,\cdot\}\}$$ is monochromatic.
\end{corollary}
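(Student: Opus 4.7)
The plan is to apply Theorem \ref{thm:main} inductively, starting from $\mathcal{B}_1 \defeq \binom{\mathbb{N}}{k}$ (a linear Rado family by pigeonhole, hence in $\mathbb{T}$) and repeatedly using clause (2) with the single polynomial $p(a)=a$ and the given $k$ to produce $\mathcal{B}_{j+1}$ from $\mathcal{B}_j$. Clause (2) guarantees that each $\mathcal{B}_j\in\mathbb{T}$, and hence by the final assertion of Theorem \ref{thm:main} each $\mathcal{B}_j$ is Ramsey.

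Write $E(A_1,\ldots,A_j)$ for the set of right-associated expressions $a_1\circ_1(a_2\circ_2(\cdots\circ_{j-1}a_j))$ with $a_i\in A_i$ and $\circ_i\in\{+,\cdot\}$. The structural observation driving the induction is the recursive identity
\[ E(A_1,\ldots,A_j)=\bigl(A_1+E(A_2,\ldots,A_j)\bigr)\cup\bigl(A_1\cdot E(A_2,\ldots,A_j)\bigr), \]
which fits exactly the shape of a member of the family produced by clause (2) of Theorem \ref{thm:main}: take $X=A_1$ and let the ``$a$-values'' range over the inner expression set $E(A_2,\ldots,A_j)$.

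I would then prove by induction on $j$ the claim that every element of $\mathcal{B}_j$ contains a set $E(A_1,\ldots,A_j)$ for some $A_1,\ldots,A_j\in\binom{\mathbb{N}}{k}$. The base case $j=1$ is immediate since $\mathcal{B}_1=\binom{\mathbb{N}}{k}$ and $E(A_1)=A_1$. For the inductive step, a member of $\mathcal{B}_{j+1}$ has the form $\{x,\,xb,\,x+b:b\in B,\,x\in X\}$ with $X\in\binom{\mathbb{N}}{k}$ and $B\in\mathcal{B}_j$; the inductive hypothesis provides $B\supseteq E(A_2,\ldots,A_{j+1})$, and the recursive identity above then exhibits $E(X,A_2,\ldots,A_{j+1})$ as a subset of this member.

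Applied with $j=n$, this yields in any finite coloring of $\mathbb{N}$ a monochromatic member of $\mathcal{B}_n$, which in turn contains a monochromatic $E(A_1,\ldots,A_n)$ for suitable $A_i\in\binom{\mathbb{N}}{k}$, proving the corollary. All of the technical weight is carried by Theorem \ref{thm:main}; the only nontrivial thing to verify at the level of the corollary is that the recursion for right-associated $\{+,\cdot\}$-expressions lines up cleanly with the template of clause (2), which the displayed identity makes transparent.
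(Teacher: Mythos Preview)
Your proposal is correct and follows exactly the approach the paper sketches in the introduction: start from the linear Rado family $\binom{\mathbb{N}}{k}$ and iterate clause (2) of Theorem \ref{thm:main} with the single polynomial $p(a)=a$, using the recursive identity for right-associated expressions to identify $E(A_1,\ldots,A_n)$ inside a member of the resulting family. Your write-up is in fact more carefully spelled out than the paper's own treatment, which simply says ``doing this repeatedly'' after exhibiting the $n=3$ case.
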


Previously, the above result for $|A_i|=1$ was known to hold in finite colorings of $\mathbb{Q}$ by a theorem of Bergelson and Moreira \cite{bergelson2017ergodic}.  However, even the $n=3$ and $|A_i|=1$ case was open for colorings of $\mathbb{N}.$

So far we have only considered applications of Theorem \ref{thm:main} that begin with the pigeonhole principle as the base family.  Instead starting with a geometric family, we obtain the following common extension of the linear and geometric van der Waerden theorems by applying Theorem \ref{thm:main} with the geometric Rado family $\mathcal{B}=\{zy^i,y: i\leq k\}$ and the linear polynomials $n\mapsto in.$

\vspace{3mm}

\begin{corollary}
The family $\{x+iy,xy,z\cdot xy^i: i\leq k\}$ is Ramsey for any $k\in\mathbb{N}$.
\end{corollary}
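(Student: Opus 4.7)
The plan is to view the target family as an immediate instance of part (2) of Theorem~\ref{thm:main}, applied to a suitable geometric Rado family with a singleton $X$ and a finite family of linear polynomials $P$.

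First I would pick the base $\mathcal{B}$. Factoring $x$ out of the multiplicative terms $xy$ and $z\cdot xy^i$ of the target suggests taking $\mathcal{B}=\{\{y,zy^i:i\le k\}:y,z\in\mathbb{N}\}$, so that $xA$ with $A\in\mathcal{B}$ contributes precisely those terms. To check that $\mathcal{B}\in\mathbb{T}$, I would note that $\mathcal{B}$ is the image under $n\mapsto 2^n$ of the linear family $\{\{b,a+ib:i\le k\}:a,b\in\mathbb{N}\}$, which is Ramsey by Brauer's theorem; hence $\mathcal{B}$ is a finite geometric Rado family, and part (1) of Theorem~\ref{thm:main} places it in $\mathbb{T}$. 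It is important here to invoke Brauer rather than van der Waerden, since the latter would not ensure that the common difference $b$ (corresponding to $y$) also lies in the monochromatic set.

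Next I would apply part (2) of Theorem~\ref{thm:main} to $\mathcal{B}$ with $X=\{x\}$ (so the auxiliary parameter is $1$) and $P=\{n\mapsto in : i=0,1,\ldots,k\}$, each of which is an integral polynomial. The resulting family lies in $\mathbb{T}$ and is therefore Ramsey, so every finite coloring of $\mathbb{N}$ admits some $x,y,z$ for which
$$\{\,x,\ xa,\ x+ia : a\in\{y,zy^0,zy^1,\ldots,zy^k\},\ i\le k\,\}$$
is monochromatic. Reading off the relevant elements — $x+iy$ (take $a=y$, $p(n)=in$), $xy$ (take $a=y$, form $xa$), and $z\cdot xy^i=x\cdot (zy^i)$ (take $a=zy^i$, form $xa$) — one sees that this monochromatic set contains $\{x+iy,\ xy,\ z\cdot xy^i:i\le k\}$, proving the corollary.

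There is no substantive obstacle: all of the work is absorbed into Theorem~\ref{thm:main}, and the corollary is essentially bookkeeping. The only delicate point is making sure that the base family $\mathcal{B}$ is geometric Rado in the sense of part (1), which is precisely what Brauer's theorem provides.
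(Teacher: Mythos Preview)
Your proposal is correct and matches the paper's own derivation: the paper likewise obtains the corollary by applying Theorem~\ref{thm:main}(2) to the geometric Rado family $\mathcal{B}=\{y,zy^i:i\le k\}$ with the linear polynomials $n\mapsto in$. Your additional remark that one needs the geometric form of Brauer's theorem (not just van der Waerden) so that $y$ itself belongs to the base configuration is a helpful clarification that the paper leaves implicit.
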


Notice that this tells us that any finite coloring of $\mathbb{N}$ contains arbitrarily long arithmetic and geometric progressions of the same color and step size, and moreover with starting points that are a simple multiplicative shift of each other.  One hope might be to prove the following refinement of this result.

\vspace{3mm}

\begin{question}\label{double vdw}
Is the family $\{x+iy,xy^i: i\leq k\}$ Ramsey for any $k\in\mathbb{N}$? 
\end{question}

Our proof of Theorem \ref{thm:main} builds on the ideas used in Moreira's proof that the family $\{xy,x+y\}$ is Ramsey \cite{moreira2017monochromatic}.  We use the fact that there are many potential choices for $x$ and $y$ in each step of the proof to build a tree of possible choices for these values.  From here we deduce new Ramsey theorems by running color focusing arguments on this tree; see especially Figures \ref{fig:focusing1} and \ref{fig:focusing2} for illustrations of this idea.



\section{Notation and technical background}

In this section we collect the notation and technical facts that will be used throughout the paper.  The reader should note that the proofs of the special cases of our main result presented in Section \ref{concrete}, which contain almost all of the combinatorial ideas needed for the proof of our general theorem, mostly avoid these notations and rely on more well known facts.  Namely, the only thing from Subsection \ref{subsection poly} needed in Section \ref{concrete} is the more usual $\mathbb{P}_0=\mathbb{N}$ case of the polynomial van der Waerden theorem, and Subsection \ref{subs family} can be entirely skipped.

The main technical difficulty with the proof of Theorem \ref{thm:main} is that each step of our induction will require us to prove Ramsey theoretic results on the space of polynomials with an additional variable (see the discussion after the proof of Theorem \ref{iterated 2}), which leads to notational difficulties if not combinatorial ones.  

Throughout, we will often abuse notation when using algebraic operations with sets.  For example, if $A$ and $B$ are sets of numbers, by $\frac{A}{B}$ we mean the set $\{\frac{a}{b}:a\in A, b\in B\}.$  Similarly, if $P$ and $Q$ are sets of polynomials, by $P(d)$ we mean the set $\{p(d):p\in P\}$ and by $Q(P(d))$ we mean the set $\{q(p(d)): q\in Q,p\in P\}.$

\subsection{Trees}

In this paper, by a \textbf{tree} we mean a collection of finite words (including the empty word) where the $n$th letter comes from alphabet $A_n.$  More precisely, given finite sets $A_0,...,A_R,$ we'll consider trees of the form $T=\bigcup_{0\leq i\leq R+1}\prod_{j<i} A_j. $  In particular, $T$ is rooted at the empty word, which has $|A_0|$ children each labeled by an element of $A_0$, and so on.

Given a tree $T$ as above, we define the $i$th level of the tree as $T_i=\prod_{j<i} A_j.$  Further, we will use $\smallfrown$ to denote concatenation, i.e., given $t\in T_i$ and $p\in A_i,$ $t\smallfrown p\in T_{i+1}.$

\subsection{Polynomial spaces, notions of size, and van der Waerden's theorem}\label{subsection poly}

As mentioned above, the proof of Theorem \ref{thm:main} will require us to consider spaces of many variabled polynomials with rational coefficients.

\begin{definition}
Let $\mathbb{P}_0=\mathbb{N},$ and having defined $\mathbb{P}_n,$ let $\mathbb{P}_{n+1}$ be the set of all formal objects of the form $\frac{a_1}{b_1}x_{n+1}+\frac{a_2}{b_2}x_{n+1}^2+...+\frac{a_k}{b_k}x_{n+1}^k,$ where $x_{n+1}$ is a formal variable different from the ones used to define $\mathbb{P}_n,$ $k\in\mathbb{N}$, and $a_j,b_j\in \bigcup_{0\leq i\leq n} \mathbb{P}_i,$ and $b_j\neq 0.$ 
\end{definition}

In particular, $\mathbb{P}_1=x\mathbb{Q}_+[x],$ the space of all formal polynomials of degree at least one with positive rational coefficients together with the $0$-polynomial $n\mapsto 0$. 

Given $p\in \mathbb{P}_{n}$ and $d\in\mathbb{P}_m,$ by $p(d)$ we mean the formal object obtained by replacing every instance of $x_{n}$ in $p$ with $d.$  We will only do this in instances where $m=n,$ in which case this is the composition of polynomials, or when $m=n-1,$ where we can think of this as evaluation.  Note that in the latter case in general this might not be an element of $\mathbb{P}_{n-1},$ but in all of our uses of this notation $d$ will have been chosen such that it is.

We will need a piecewise syndetic version of the polynomial van der Waerden theorem for our proofs. To formally state this, we need the following definitions.

\begin{definition}
\hspace{3mm}
\begin{itemize}
    \item A set $S\subseteq \mathbb{P}_n$ is \textbf{syndetic} if there is a finite set $F\subset\mathbb{P}_n$ such that $\mathbb{P}_n\subseteq S-F.$
    
    \item A set $T\subseteq\mathbb{P}_n$ is \textbf{thick} if for any finite set $F,$ there is a $t\in T$ such that $t+F\subset T.$
    
    \item  A set $A\subseteq\mathbb{P}_n$ is \textbf{piecewise syndetic} if there is a thick set $T$ and a syndetic set $S$ with $S\cap T\subseteq A.$
\end{itemize}
\end{definition}  

Unlike for $\mathbb{N},$ $x_{n}\mathbb{P}_n$ is not piecewise syndetic for $n>0.$  This was mentioned by Moreira in \cite{moreira2017monochromatic} Section 7 as a difficulty with extending his proof to rings that are not large ideal domains (such as $\mathbb{P}_n$ for $n>0)$.  However, this fact will not be problematic for us, since we will frontload our use of lemmas about piecewise syndetic sets to before we do any multiplications in all of our proofs.

We now state the two facts about piecewise syndetic sets that we will need.  The first is well known; see, e.g. \cite{hindman2011algebra}, Theorem 4.4.

\begin{lemma}\label{lemma tech}
If $A\subseteq \mathbb{P}_n$ is piecewise syndetic and $A=A_0\cup...\cup A_R,$ then some $A_i$ is piecewise syndetic.
\end{lemma}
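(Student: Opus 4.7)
The plan is to follow the standard proof of partition regularity for piecewise syndetic sets via the algebra of the Stone-\v{C}ech compactification, which is carried out in Hindman-Strauss (Theorem~4.4 of the cited reference) for $\mathbb{N}$ and which transfers verbatim to $\mathbb{P}_n$ since $\mathbb{P}_n$ is a countable semigroup under the operation implicit in the definitions above.

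Specifically, I would first form the Stone-\v{C}ech compactification $\beta \mathbb{P}_n$ and extend the operation on $\mathbb{P}_n$ to make $\beta\mathbb{P}_n$ into a compact right-topological semigroup. As such, it admits a smallest two-sided ideal $K(\beta \mathbb{P}_n)$. The key characterization to establish is the following: a set $A \subseteq \mathbb{P}_n$ is piecewise syndetic if and only if there exists an ultrafilter $p \in \overline{K(\beta \mathbb{P}_n)}$ with $A \in p$. One direction is a compactness argument that produces such a $p$ from a thick-syndetic witness for $A$; the other direction unpacks membership in the closure of the minimal ideal to produce a syndetic set $S$ and a thick set $T$ with $S\cap T\subseteq A$.

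Granted this characterization, the lemma is immediate: if $A = A_0 \cup \dots \cup A_R$ is piecewise syndetic, fix an ultrafilter $p \in \overline{K(\beta \mathbb{P}_n)}$ with $A \in p$. Since $p$ is an ultrafilter and $A$ is partitioned into finitely many pieces, some $A_i$ lies in $p$, and therefore $A_i$ is piecewise syndetic.

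The only real item of bookkeeping is to confirm that the definitions of syndetic and thick used here (in which $S$ is syndetic when $\mathbb{P}_n = S - F$ and $T$ is thick when $tF \subseteq T$ is solvable for every finite $F$) are the ones compatible with this Stone-\v{C}ech framework for the appropriate semigroup structure on $\mathbb{P}_n$. I expect no genuine obstacle here, since the arguments in Hindman-Strauss make no use of any property of $\mathbb{N}$ beyond being a countable (semi)group, and the whole proof transfers mechanically. If one prefers to avoid ultrafilters entirely, one can alternatively give a direct finitary argument: iterate the definition of thickness along a K\"onig's lemma style tree to locate, inside the thick witness $T$ for $A$, arbitrarily large finite configurations $tG$ on which a fixed $A_i$ captures a syndetic fraction, and then extract a thick-syndetic witness for that $A_i$ by compactness.
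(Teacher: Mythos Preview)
Your proposal is correct and aligns exactly with the paper's treatment: the paper does not prove this lemma at all but simply cites it as well known, pointing to Hindman--Strauss, Theorem~4.4, which is precisely the Stone--\v{C}ech/minimal-ideal argument you outline. Your sketch is a faithful expansion of that citation, so there is nothing to add.
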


Finally, we will use the following variant of the Polynomial van der Waerden theorem.

\begin{theorem}[Polynomial van der Waerden, essentially \cite{bergelson1996polynomial}]\label{pvd}
Let $P\subset \mathbb{P}_{n+1}$ be a finite set of polynomials and $A\subseteq\mathbb{P}_n$ be piecewise syndetic.  Then there is some $d\in\mathbb{P}_n$ such that $$A\cap \bigcap_{p\in P}(A-p(d))$$ is piecewise syndetic.

\end{theorem}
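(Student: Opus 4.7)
The plan is to deduce this from the partition version of the polynomial van der Waerden theorem of Bergelson and Leibman \cite{bergelson1996polynomial}, combined with a standard upgrade to the piecewise syndetic setting.

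First, I would verify that the partition polynomial van der Waerden theorem holds in this setting: for any finite $P \subset \mathbb{P}_{n+1}$ and any finite coloring $\mathbb{P}_n = C_1 \cup \cdots \cup C_r$, some $C_i$ contains a configuration $\{a\} \cup \{a + p(d) : p \in P\}$. The Bergelson-Leibman proof via PET induction only exploits the additive group structure of the target and a well-defined degree filtration on the ``polynomials'' in $P$; since $(\mathbb{P}_n, +)$ is a countable abelian group and evaluation $p(d)$ produces elements of $\mathbb{P}_n$ by the construction of $\mathbb{P}_{n+1}$, the classical proof transfers essentially verbatim.

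Second, I would bootstrap to piecewise syndeticity. The cleanest route is through the Stone-\v{C}ech compactification $\beta \mathbb{P}_n$: by \cite{hindman2011algebra}, a set is piecewise syndetic exactly when it belongs to some ultrafilter in the closure of the smallest two-sided ideal $\overline{K(\beta \mathbb{P}_n)}$. Picking such an ultrafilter $\mathfrak{u}$ with $A \in \mathfrak{u}$ and applying the partition version ``inside'' $\mathfrak{u}$ (using that ultrafilters respect finite Boolean combinations), one sees that the set of $d \in \mathbb{P}_n$ for which $A \cap \bigcap_{p \in P}(A - p(d)) \in \mathfrak{u}$ is non-empty, and any such $d$ witnesses piecewise syndeticity of the intersection. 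Alternatively, a more hands-on argument would write $A \supseteq S \cap T$ with $S$ syndetic (say $\mathbb{P}_n = S - F$) and $T$ thick, apply the partition theorem to the coloring $c(x) = f$ for some $f \in F$ with $x + f \in S$ restricted to a sufficiently large block of $T$, and invoke \Cref{lemma tech} to extract a piecewise syndetic color class; thickness of $T$ would then be used to upgrade a single configuration to a piecewise syndetic family of them.

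The step I expect to be the main obstacle is the first: confirming that Bergelson-Leibman's PET induction genuinely transfers to the space $\mathbb{P}_n$, whose elements are formal polynomials with ``coefficients'' ranging over $\bigcup_{i \leq n} \mathbb{P}_i$ rather than fixed integers or rationals. The combinatorial core of PET is unchanged, but care is needed to track that at each step of the induction the differences and evaluations remain in the appropriate $\mathbb{P}_m$, so that the degree/leading-term bookkeeping behaves as expected. Once this is in place, both the partition theorem and its piecewise syndetic upgrade follow along well-trodden lines.
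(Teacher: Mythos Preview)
Your proposal is correct and matches the paper's approach: both deduce the result from an existing polynomial van der Waerden theorem and then pass to the piecewise syndetic setting. The paper is even terser---it simply cites a ready-made version for commutative semigroups (\cite{bergelson2016interplay}, Theorem~7.8) rather than re-running PET induction, and it resolves exactly the obstacle you flag (evaluations $p(d)$ possibly landing outside $\mathbb{P}_n$ because of the rational coefficients) by clearing denominators: a configuration with steps $\tfrac{d}{2},\tfrac{d}{3},\ldots$ is obtained from one with integral multiples of some $d'$ by setting $d$ equal to a common multiple of the denominators times $d'$.
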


This follows from any variant of the polynomial van der Waerden theorem for commutative semigroups, such as \cite{bergelson2016interplay} Theorem 7.8.  Note that the presence of rational rather than the integral coefficients more typically discussed in this context does not lead to any difficulties.  For example, the existence of monochromatic progressions of the form $a,a+d, a+\frac{d}{2}, a+\frac{d}{3}$ follows from the existence of monochromatic progressions of the form $a, a+2d', a+3d', a+6d'$ and setting $d=6d'.$

\subsection{Ramsey families}\label{subs family}

We will use the following generalization of the linear and geometric Rado families discussed in the introduction.  This generality is mostly present just to isolate the properties of the Ramsey families that are needed for the proof; that is, these are the properties that will be used to glue two Ramsey families together in a way that preserves their structure.

\begin{definition}\label{def ramsey}
Let $\mathcal{B}$ be a family of finite subsets of $\mathbb{P}_n.$
\begin{itemize}
    \item $\mathcal{B}$ is \textbf{$\mathbb{P}_n$-Ramsey} if for any finite coloring $c:\mathbb{P}_n\rightarrow [R]$ there is a $B\in\mathcal{B}$ such that $c$ is constant on $B.$ 
    \item $\mathcal{B}$ is a \textbf{linear Ramsey family} if it is Ramsey and if for every $B\in\mathcal{B}$ and $c\in \mathbb{P}_n,$ $cB\in \mathcal{B}.$
    \item $\mathcal{B}$ is a \textbf{geometric Ramsey family} if it is Ramsey and every $B,C\in\mathcal{B}$ can be expressed as $B=(b_0,...,b_m),$ $C=(c_0,...,c_m)$, where also $(b_0c_0,...,b_mc_m)\in \mathcal{B}.$ Here we insist that this ordering is consistent between elements (for example, the like terms in geometric progressions).
\end{itemize}
\end{definition}

Observe that every linear Rado family is a linear Ramsey family in $\mathbb{P}_n,$ which can be seen by considering the polynomials of the form $\{ix_{n}: i\leq k\}$ for sufficiently large $k,$ and every geometric Rado family is a geometric Ramsey family in $\mathbb{P}_n$ by considering polynomials of the form $\{x_{n}^i: i\leq k\}.$  Throughout this paper we will only be interested in linear and geometric Ramsey families of this form, so we do not specify if they are $\mathbb{P}_n$ Ramsey or $\mathbb{P}_{n+1}$ Ramsey.  This will only be relevant in the proofs of Propositions \ref{geo proof} and \ref{linear proof}. 

The following are representative examples of each type of Ramsey family that is in $\mathbb{T}$ and thus can be composed as in Theorem \ref{thm:main}.  

\begin{example}

\hspace{2mm}
    \begin{enumerate}
        \item The family $\{\{x,y, x+y\}: x,y\in\mathbb{N}\}$ is a linear Ramsey family.  Examples such as this will be analyzed in Proposition \ref{linear proof}.

        \item The family $\mathcal{B}=\{\{x,y,xy^i: i\leq k\}: x,y\in \mathbb{N}\}$ is a geometric family: if $\{x_0,y_0,x_0y_0^i:i\leq k\}$ and $\{x_1,y_1,x_1y_1^i:i\leq k\}$ are in $\mathcal{B}$ then so is $\{x_0x_1, y_0y_1, (x_0x_1)(y_0y_1)^i: i\leq k\}$.  However, it is not a linear Ramsey family.  Examples such as this will be analyzed in Propositions \ref{geom2} and \ref{geo proof}.

        \item The family $\{\{x+y, xy\}: x,y\in\mathbb{N}\}$ is neither linear nor geometric.  However, it is still a member of $\mathbb{T},$ and examples such as this will be analyzed in Propositions \ref{iterated 2} and \ref{prop iterated full}.  
    \end{enumerate}
\end{example}

We will also use the following fact about geometric Ramsey families.

\begin{observation}\label{obs tech}
    Let $\mathcal{B}$ be a geometric Ramsey family and $B_0,...,B_R\in\mathcal{B},$ where we can write $B_i=(b_{0,i},\ldots,b_{m,i})$.  Then defining $b_j=b_{j,0}\cdot b_{j,1}\ldots\cdot b_{j,R}$ for each $j\in \{0,\ldots,m\}$ and $B=\{b_0,\ldots,b_m\},$ we have that $B\in \mathcal{B}$.
\end{observation}

\begin{proof}
    This follows from the definition of being geometric and induction on $R.$ 
    \end{proof}

\section{Two concrete cases}\label{concrete}

In this section we prove two special cases of our main result that contain essentially all of the ideas needed for the general Theorem \ref{thm:main} while requiring much less notation.  Our first example is the following.

\begin{proposition}\label{geom2}
Any $2$-coloring of $\mathbb{N}$ contains a monochromatic set of the form $$\{a,ab,ac,abc, a+b,a+c,a+bc\}.$$
\end{proposition}

This is the two color case of Theorem \ref{thm:main} applied to the geometric Ramsey family $\{b,c,bc\}$ and the polynomials $n\mapsto 0$ and $n\mapsto n$.  The proof here generalizes easily to more colors and arbitrary geometric Ramsey families and polynomials, although we save the details for the next section for the sake of clarity and simplicity of notation.  In particular, the methods here can be easily adapted to prove Corollary \ref{cor:kmrr}.

\begin{proof}
Suppose that $\mathbb{N}=C_0\cup C_1.$  Our goal is to build a tree of infinite sets with edges labeled by polynomials that we will use for a color focusing argument.  

\begin{claim}\label{geo simple claim}
There are finite sets of polynomials $P_0,P_1\subset \mathbb{P}_1,$ elements $d_0,d_1\in \mathbb{N},$ and a tree of infinite monochromatic sets $A_t\subseteq \mathbb{N}$ for $t\in T=\bigcup_{0\leq i\leq 2}\prod_{j<i}P_{j}$ such that
\begin{enumerate}
    \item any $2$-coloring of ${P}_{1}$ contains a monochromatic set of the form $\{b,c,bc\}$ (in this case, $P_1=\{x^i: i\leq s\},$ where $s$ is the $2$-color Schur number, would work).

    \item any $2^{|P_{1}|+1}$-coloring of $P_0$ contains a monochromatic set of the form $\{b,c,bc\}.$
    
    \item  for each $t\in T_i$ there is an $r\in \{0,1\}$ such that $A_t\cup\bigcup_{k\geq i}(A_t+\prod_{i\leq j\leq k}P_j(d_j))\subseteq C_r.$

    \item for $t\in T_i$ and $p_i\in P_i$ we have $p_i(d_i)A_t=A_{t\smallfrown p_i}.$
\end{enumerate}
\end{claim}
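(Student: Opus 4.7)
My plan is to fix the Schur-type sets $P_0, P_1$ first, then choose $d_0, d_1$ by two applications of polynomial van der Waerden, and finally extract $A_\emptyset$ via pigeonhole on a piecewise syndetic refinement, so that the tree defined by (4) satisfies (3) at every level.

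For conditions (1) and (2), I will take $P_1 = \{x^i : 1 \leq i \leq s_1\}$ and $P_0 = \{x^i : 1 \leq i \leq s_0\}$, where $s_1$ is the $2$-color Schur number and $s_0$ is the $2^{|P_1|+1}$-color Schur number; a monochromatic triple $\{x^a, x^b, x^{a+b}\}$ inside $P_k$ corresponds to a Schur triple among the exponents, so both (1) and (2) follow directly from Schur's theorem. By Lemma \ref{lemma tech}, one of $C_0, C_1$ (say $C_0$) is piecewise syndetic. I first apply Theorem \ref{pvd} to $C_0$ with the finite polynomial set $P_1 \subseteq \mathbb{P}_1$ to produce $d_1 \in \mathbb{N}$ and a piecewise syndetic $S_1 = C_0 \cap \bigcap_{p_1 \in P_1}(C_0 - p_1(d_1))$, and then apply Theorem \ref{pvd} again to $S_1$ with the enlarged polynomial set $\{p_0,\, p_0 \cdot p_1(d_1) : p_0 \in P_0,\, p_1 \in P_1\} \subseteq \mathbb{P}_1$ to obtain $d_0 \in \mathbb{N}$ and a piecewise syndetic $S_0 \subseteq S_1$ with $S_0 + p_0(d_0) \subseteq S_1$ and $S_0 + p_0(d_0) p_1(d_1) \subseteq S_1 \subseteq C_0$ for all $p_0, p_1$. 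Hence any $A_\emptyset \subseteq S_0$ already satisfies (3) at the root $t = \emptyset$ with $r_\emptyset = 0$.

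To handle (3) at levels $1$ and $2$ while preserving (4), I will define a finite coloring $\tilde c$ on $S_0$ whose coordinates record $c(p_0(d_0) a)$, $c(p_0(d_0) a + p_1(d_1))$, and $c(p_1(d_1) p_0(d_0) a)$ for every $p_0 \in P_0,\, p_1 \in P_1$, and then use Lemma \ref{lemma tech} to pass to a piecewise syndetic subset $S_0^\ast \subseteq S_0$ on which $\tilde c$ is constant. Taking $A_\emptyset$ to be any infinite subset of $S_0^\ast$, the sets $A_{t \smallfrown p_i} = p_i(d_i) A_t$ prescribed by (4) are automatically infinite monochromatic, and reading off the constant values of $\tilde c$ gives candidate labels $r_{p_0}, r_{(p_0, p_1)} \in \{0,1\}$. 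The main obstacle I expect is verifying (3) at the level-$1$ nodes: both $A_{p_0} = p_0(d_0) A_\emptyset$ and $A_{p_0} + P_1(d_1)$ must lie in a \emph{single} color $C_{r_{p_0}}$, but pigeonhole alone only makes each monochromatic without forcing the two to agree. Resolving this requires enlarging the polynomial set used in the second pvd application so that the additive-multiplicative compatibility between $p_0(d_0)a$ and $p_0(d_0) a + p_1(d_1)$ is baked into $S_0$ before the pigeonhole step, exploiting the flexibility of Theorem \ref{pvd} over $\mathbb{P}_1$. This interplay between additive van der Waerden structure and multiplicative pigeonhole is the heart of the tree-based color focusing method and is the most delicate step.
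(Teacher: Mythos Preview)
Your setup of $P_0,P_1$ via Schur numbers is fine, and your verification of property~(3) at the root is correct. The gap is exactly the one you flag at level~$1$, and your proposed fix does not close it. You choose $d_1$ \emph{before} $d_0$ and then do a single pigeonhole pass at the end; with that order there is no polynomial you can add to the second pvd application that forces $c(p_0(d_0)a)=c(p_0(d_0)a+p_1(d_1))$. The shift you would need is $a\mapsto a+\tfrac{p_1(d_1)}{p_0(d_0)}$, and in your second pvd step the free variable is evaluated at $d_0$, so the relevant ``polynomial'' would be $p_1(d_1)/p_0(x)$, which is not in $\mathbb{P}_1$. Rational coefficients do not help here; you genuinely need $p_0(d_0)$ to be a \emph{constant} already known before the pvd step that produces the $\tfrac{1}{p_0(d_0)}$ factor.

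The paper's construction reverses your order and, crucially, \emph{interleaves} a pigeonhole refinement between the two pvd steps. First pick $d_0$ via pvd on $C_0$ with $P_0$; then pass to a piecewise syndetic $A_0'\subseteq A_0$ on which the colour of $p_0(d_0)a$ is fixed for each $p_0$; only then apply pvd to $A_0'$ with the set
\[
P_1'=\Bigl\{\,p_0(d_0)\,p_1,\ \frac{p_1}{p_0(d_0)}\ :\ p_i\in P_i\Bigr\}
\]
to obtain $d_1$. The divided polynomials are now legitimate elements of $\mathbb{P}_1$ because $p_0(d_0)$ is a fixed natural number. For $a\in A_\emptyset\subseteq A_1$ one then has $a+\tfrac{p_1(d_1)}{p_0(d_0)}\in A_0'$, hence
\[
p_0(d_0)a+p_1(d_1)=p_0(d_0)\Bigl(a+\tfrac{p_1(d_1)}{p_0(d_0)}\Bigr)\in p_0(d_0)A_0',
\]
which by the \emph{prior} pigeonhole step has the same colour as $p_0(d_0)a$. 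That is precisely property~(3) at level~$1$. A second pigeonhole pass on $A_1$ (recording the colours of $a\,p_0(d_0)\,p_1(d_1)$) then handles level~$2$. So the missing idea is not a bigger polynomial set but the alternation pvd\,$\to$\,pigeonhole\,$\to$\,pvd\,$\to$\,pigeonhole, together with the divided polynomials $\tfrac{p_1}{p_0(d_0)}$ that this order makes available.
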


\vspace{2mm}

We will call structures similar to the one described above \textbf{color focusing trees}.  Before proving the claim, let us see how to use it to finish the proof of Proposition \ref{geom2} (see, especially, Figure \ref{fig:focusing1}). 

\begin{figure}
    \centering
    \includegraphics[scale=2.5]{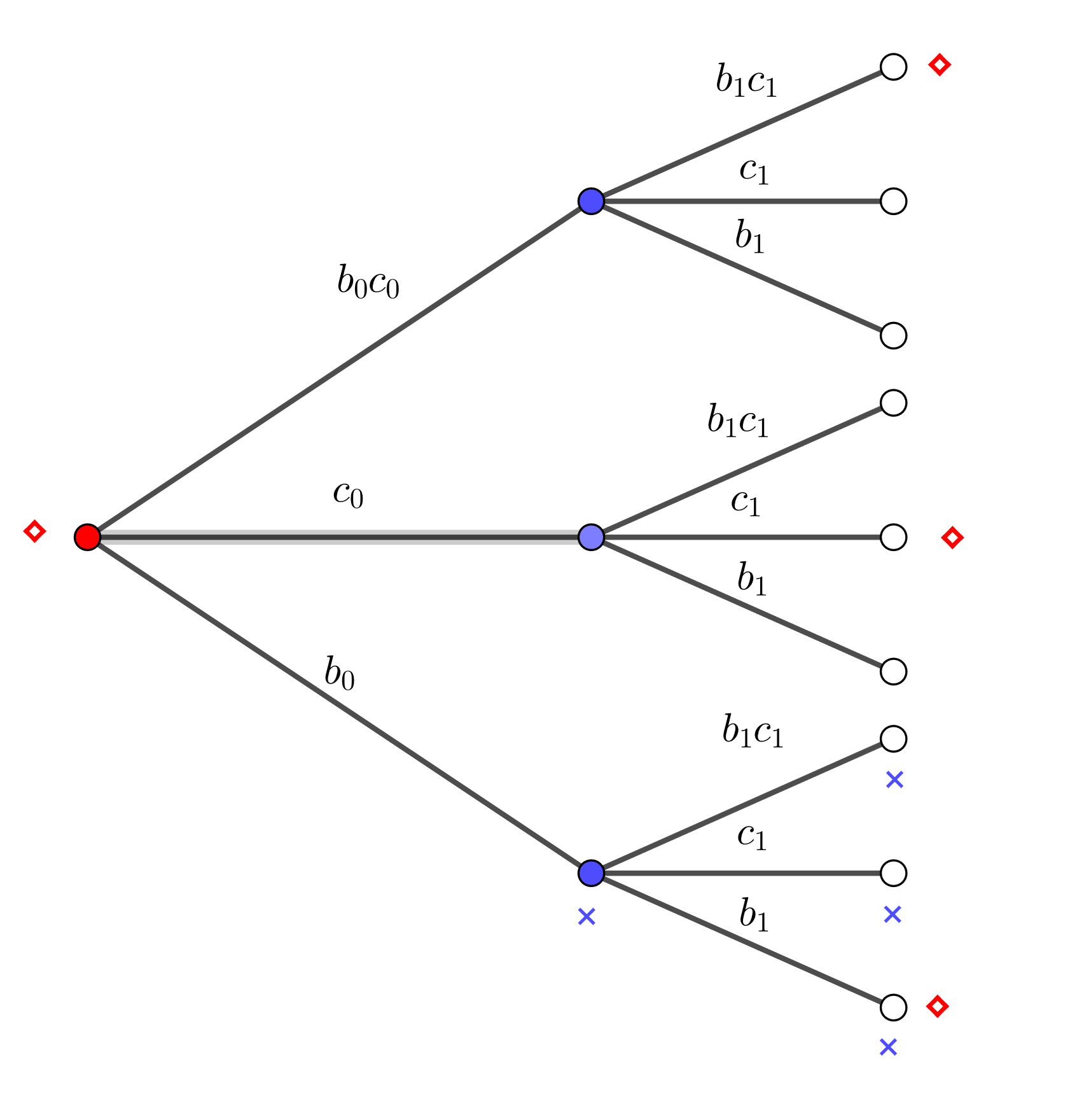}
    \caption{The color focusing step of the proof of Proposition \ref{geom2}.  Each vertex of the tree represents an infinite monochromatic set and multiplying by the indicated polynomials sends elements of a vertex to its child.  The uncolored vertices are all the same color.  If these vertices are blue then we can finish the proof by setting $b={b_1}(d_1)$ and $c={c_1}(d_1)$ and considering the vertices marked with ${\color{blue} \times}.$  Otherwise they are red and we can finish by setting $b={b_0}(d_0)\cdot b_1(d_1)$ and $c={c_0}(d_0)\cdot {c_1}(d_1)$ and considering the vertices marked with ${\color{red} \diamondsuit}.$}
    \label{fig:focusing1}
\end{figure}

Without loss of generality we may assume that $A_\emptyset\subseteq C_0.$  Note also that by Property (3) of Claim \ref{geo simple claim} that each set $A_t$ for $t\in T$ is monochromatic, and hence we can $2$-color the $t\in T$ based on the color of $A_t$.  Now, consider an auxiliary $2^{|P_1|+1}$-coloring of the $p\in P_0$ based on the $(|P_1|+1)$-tuple consisting of the color of $A_p$ together with the colors of its $|P_1|$ descendants in the tree.  By Property (2) of the Claim we find $b_0,c_0\in P_0$ such that the set $P_0'=\{{b_0}, {c_0}, {b_0c_0}\}$ is monochromatic according to this coloring.  By Property (4) of the Claim, this means that for any $a_0\in A_\emptyset$ the set $\{a_0p(d_0): p\in P'_0\}$ is monochromatic.

If this set is monochromatic in $C_0$ then we can finish the proof by letting $a\in A_\emptyset\subseteq C_0,$ $b={b_0(d_0)},$ and $c={c_0(d_0)}.$ This is because $\{ab,ac,abc\}\subset C_0$ by the above paragraph and $\{a+b,a+c,a+bc\}\subset A_\emptyset+P_0(d_0)\subseteq C_0$ by Property (3) of the Claim.

Otherwise, we can assume that $A_{p}\subseteq C_1$ for each $p\in P_0'.$  By our construction, for $p_1\in P_1$ and $p'_0
\in P_0',$ the color of $p_1(d_1)A_{p'_0}$ depends only on $p_1.$  Considering the $2$-coloring of $p\in P_1$ based on the color of $A_{p'\smallfrown p}$ and using Property (1) of the claim, we find $b_1,c_1\in P_1$ such that $P_1'=\{{b_1}, {c_1}, {b_1c_1}\}$ is monochromatic.  This means that the sets $A_{p'_0\smallfrown p'_1}$ for $p'_0\in P'_0$ and $p'_1\in P'_1$ are all monochromatic in the same color.

If $p'_1(d_1)A_{p_0'}\subseteq C_1$ for $p'_1\in P'_1$ and $p'_0\in P'_0,$ then we can finish as above by taking $a\in A_{p_0'},$ $b={b_1(d_1)},$ $c={c_1(d_1)}$, and applying Properties (3) and (4) of the Claim as done above.

Otherwise, $p'_1(d_1)A_{p_0'}\subseteq C_0$ for $p'_1\in P'_1$ and $p'_0\in P'_0.$  In this case we can finish by setting $a\in A_\emptyset,$ $b={b_0}(d_0)\cdot {b_1(d_1)},$ and $c={c_0(d_0)}\cdot {c_1(d_1)},$ and applying Properties (3) and (4) of the Claim as above.

Now, in order to finish the proof of Proposition \ref{geom2} we just need to prove Claim \ref{geo simple claim}.

\begin{proof}[Proof of Claim \ref{geo simple claim}]
The construction of the sets $A_t$ is routine and follows from a couple of applications of the polynomial van der Waerden theorem.  If we were interested in finding a sequence rather than a tree of infinite sets satisfying the conditions of the Claim, then the construction would be essentially the same as that used by Moreira in \cite{moreira2017monochromatic} Corollary 1.5.

First we define $P_1$ and $P_0.$ Let $s_1$ be the $2$-color Schur number and define $P_1=\{x^i: i\leq s_1\}.$  Let $s_0$ be the $2^{s_1+1}$-color Schur number and $P_0=\{x^i: i\leq s_0\}.$  These satisfy properties (1) and (2).

Now, without loss of generality we may assume that $C_0$ is piecewise syndetic by Lemma \ref{lemma tech}.  By Theorem \ref{pvd}, we know that there is a $d_0\in \mathbb{N}$ such that the set $A_0=C_0\cap\bigcap_{p\in P_0} (C_0-p(d_0))$ is piecewise syndetic.  Consider the $2^{|P_0|}$-coloring of $a_0\in A_0$ based on the $|P_0|$-tuple listing the colors of $a_0\cdot p_0(d_0)$ for $p_0\in P_0.$  By Lemma \ref{lemma tech}, one of these color classes is piecewise syndetic.  Let this set be $A_0'.$

Apply Theorem \ref{pvd} to $A'_0$ with the set of polynomials $$P_1'=\{p_0(d_0) p_1,\frac{p_1}{p_0(d_0)}: p_i\in P_i\} $$ to find $d_1\in \mathbb{N}$ such that $A_1=A_0'\cap\bigcap_{p_1\in P_1'} (A_0'-p_1(d_1))$ is piecewise syndetic.  Finally, consider the $2^{|P_0|\cdot|P_1|}$ coloring of the $a_0\in A_1$ based on the tuple listing the color of $a_0\cdot p_0(d_0)\cdot p_1(d_1),$ and let $A_\emptyset$ be the color class that is piecewise syndetic by Lemma \ref{lemma tech}.  Letting $A_t$ be defined to satisfy Property (4) of the claim gives the desired sets.
\end{proof}

\end{proof}

Our next special case is the first iterative case of Theorem \ref{thm:main}, i.e. the $|A|=|B|=|C|=1$ case of Equation \ref{example}.

\begin{proposition}\label{iterated 2}
Any $2$-coloring of $\mathbb{N}$ contains a monochromatic set of the form $$\{a+b+c,a+bc,a(b+c),abc\}.$$
\end{proposition}

As in the proof of Proposition \ref{geom2}, our strategy will be to build a color focusing tree.  We will need the following version of Moreira's theorem \cite{moreira2017monochromatic}, whose proof requires only a slight variation on Moreira's arguments.  

\begin{theorem}\label{mor}
Let $F\subset \mathbb{N}$ be finite.  For any finite coloring of $\mathbb{P}_1$ there are $x,y\in\mathbb{P}_1$ such that $$\{xy, x+\frac{y}{f}:f\in F\}$$ is monochromatic.  
\end{theorem}

We will skip the proof of this theorem for now; the only needed change to Moreira's arguments is that all of the applications of van der Waerden's theorem are done at the start of the proof before any multiplications.  This avoids the problem caused by the set $x\mathbb{Q}[x]$ not being piecewise syndetic in $\mathbb{Q}[x].$   Theorem \ref{mor} will also follow from Proposition \ref{geo proof} below applied to the trivial Ramsey family $\mathcal{B}=\binom{\mathbb{P}_1}{1}.$  Alternatively, Theorem \ref{mor} directly follows from \cite{alweiss2024monochromatic} Theorem 1.2, which appeared after the initial publication of this article. 

\begin{proof}[Proof of Proposition \ref{iterated 2}]
Fix a $2$-coloring $\mathbb{N}=C_0\cup C_1.$ We will use the following color focusing tree.

\begin{claim}\label{tree iterated 2}
There are finite sets $P_0,P_1\subset \mathbb{P}_1$, naturals $d_0,d_1\in \mathbb{N},$ and a tree of infinite monochromatic sets $A_t\subseteq \mathbb{N}$ for $t\in T=\bigcup_{0\leq i\leq 2}\prod_{j<i}P_j$ satisfying:

\begin{enumerate}
    \item any $2$-coloring of $P_0$ contains a monochromatic set of the form $\{b_0c_0, b_0+c_0\}.$
    
    \item any $2$-coloring of $P_1$ contains a monochromatic set of the form $$\{b_1c_1, b_1+c_1, b_1+\frac{c_1}{p_0(d_0)}: p_0\in P_0\}.$$
    
    \item for each $t\in T_i$ there is an $r\in \{0,1\}$ such that $A_t\cup\bigcup_{k\geq i}(A_t+\prod_{i\leq j\leq k}P_j(d_j))\subseteq C_r.$
    
    \item for $t\in T_i$ and $p_i\in P_i$ we have $p_i(d_i)A_t=A_{t\smallfrown p_i}.$
\end{enumerate}
\end{claim}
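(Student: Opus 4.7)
The argument follows the same blueprint as the proof of Claim~\ref{geo simple claim}, with Proposition~\ref{mor} (combined with a standard compactness argument on the countable set $\mathbb{P}_1$) playing the role that the Schur numbers played there. First, by Proposition~\ref{mor} applied with $F=\{1\}$ and compactness, I would extract a finite $P_0\subset\mathbb{P}_1$ satisfying Property~(1). Later, once $d_0$ has been chosen, I would apply Proposition~\ref{mor} with $F=\{1\}\cup\{p_0(d_0):p_0\in P_0\}$ and compactness again to obtain a finite $P_1\subset\mathbb{P}_1$ satisfying Property~(2) (the case $f=1$ supplies the term $b_1+c_1$).

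Next I would build the sets $A_t$ in several stages. Assume without loss of generality (by Lemma~\ref{lemma tech}) that $C_0$ is piecewise syndetic. Apply Theorem~\ref{pvd} to $C_0$ with polynomial set $P_0\subset\mathbb{P}_1$ to obtain $d_0\in\mathbb{N}$ and a piecewise syndetic $A^{(1)}=C_0\cap\bigcap_{p_0\in P_0}(C_0-p_0(d_0))$. Refine $A^{(1)}$ by the $2^{|P_0|}$-coloring of $a$ given by the tuple $(c(p_0(d_0)\,a))_{p_0\in P_0}$ to extract, via Lemma~\ref{lemma tech}, a piecewise syndetic $A^{(2)}\subseteq A^{(1)}$ on which the color of $p_0(d_0)\,a$ depends only on $p_0$. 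Having now fixed $P_1$, apply Theorem~\ref{pvd} to $A^{(2)}$ with the polynomial set
\[
P_1'=\left\{p_0(d_0)\,p_1,\ \tfrac{p_0'(d_0)\,p_1}{p_0(d_0)}\ :\ p_0,p_0'\in P_0,\ p_1\in P_1\right\}\subset\mathbb{P}_1
\]
to get $d_1\in\mathbb{N}$ and a piecewise syndetic $A^{(3)}$ with $A^{(3)}+q(d_1)\subseteq A^{(2)}$ for every $q\in P_1'$. A final color refinement (Lemma~\ref{lemma tech}) by the tuple $(c(p_0(d_0)\,p_1(d_1)\,a))_{p_0,p_1}$ yields a piecewise syndetic $A_\emptyset\subseteq A^{(3)}$ on which this tuple is constant. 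Define $A_t=\prod_{j<i}t_j(d_j)\,A_\emptyset$ for $t\in T_i$, which is Property~(4).

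Property~(3) then follows by unpacking the inclusions. For $t=\emptyset$, the shifts by $p_0(d_0)$ are absorbed by $A_\emptyset\subseteq A^{(1)}\subseteq C_0-p_0(d_0)$, and the shifts by $p_0(d_0)\,p_1(d_1)$ by $A_\emptyset\subseteq A^{(3)}\subseteq A^{(2)}-p_0(d_0)\,p_1(d_1)\subseteq C_0-p_0(d_0)\,p_1(d_1)$. For $t=p_0\in T_1$ one has $A_t=p_0(d_0)\,A_\emptyset$, and one needs $p_0(d_0)\,A_\emptyset+p_0'(d_0)\,p_1(d_1)\subseteq C_{r_t}$; factoring out $p_0(d_0)$ reduces this to showing that multiplication by $p_0(d_0)$ sends $A_\emptyset+p_0'(d_0)\,p_1(d_1)/p_0(d_0)$ to the fixed color $r_t$. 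This is exactly why I include the division shifts in $P_1'$: the translate then lies in $A^{(2)}$, on which multiplication by $p_0(d_0)$ is color-constant. For $t\in T_2$ the union in Property~(3) is empty and one needs only $A_t\subseteq C_{r_t}$, which comes from the final refinement.

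The main obstacle is identifying the correct polynomial set $P_1'$: the ``division'' shifts $p_0'(d_0)\,p_1/p_0(d_0)$ are what force Property~(3) to propagate to level~$T_1$, and they are legal inputs to Theorem~\ref{pvd} precisely because $\mathbb{P}_1=x\mathbb{Q}[x]$ has rational rather than integral coefficients (so Theorem~\ref{pvd} automatically clears denominators by choosing $d_1$ divisible enough). Up to this bookkeeping, the construction and color-focusing verification mirror Claim~\ref{geo simple claim} step-for-step.
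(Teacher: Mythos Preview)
Your proposal is correct and follows exactly the approach the paper indicates: it replaces the Schur numbers in the proof of Claim~\ref{geo simple claim} by Proposition~\ref{mor} plus compactness, and otherwise repeats the same polynomial van der Waerden / color-refinement construction. You have also correctly spotted the one point the paper glosses over, namely that because Property~(2) involves $p_0(d_0)$, the set $P_1$ must be chosen \emph{after} $d_0$ (unlike in Claim~\ref{geo simple claim} where $P_1$ came first); your interleaving $P_0\to d_0\to P_1\to d_1$ and the enlarged shift set $P_1'$ are precisely what is needed.
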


The proof of Claim \ref{tree iterated 2} is nearly identical to that of Claim \ref{geo simple claim}, with the only real difference being that instead of Schur's theorem we use Theorem \ref{mor} and compactness to ensure that polynomials $P_i\subseteq \mathbb{P}_1$  satisfying Properties (1) and (2) exist.  We omit the details.

We now use the Claim to complete the proof of Proposition \ref{iterated 2}; see especially Figure \ref{fig:focusing2}. 

\begin{figure}
    \centering
    \includegraphics[scale=2.5]{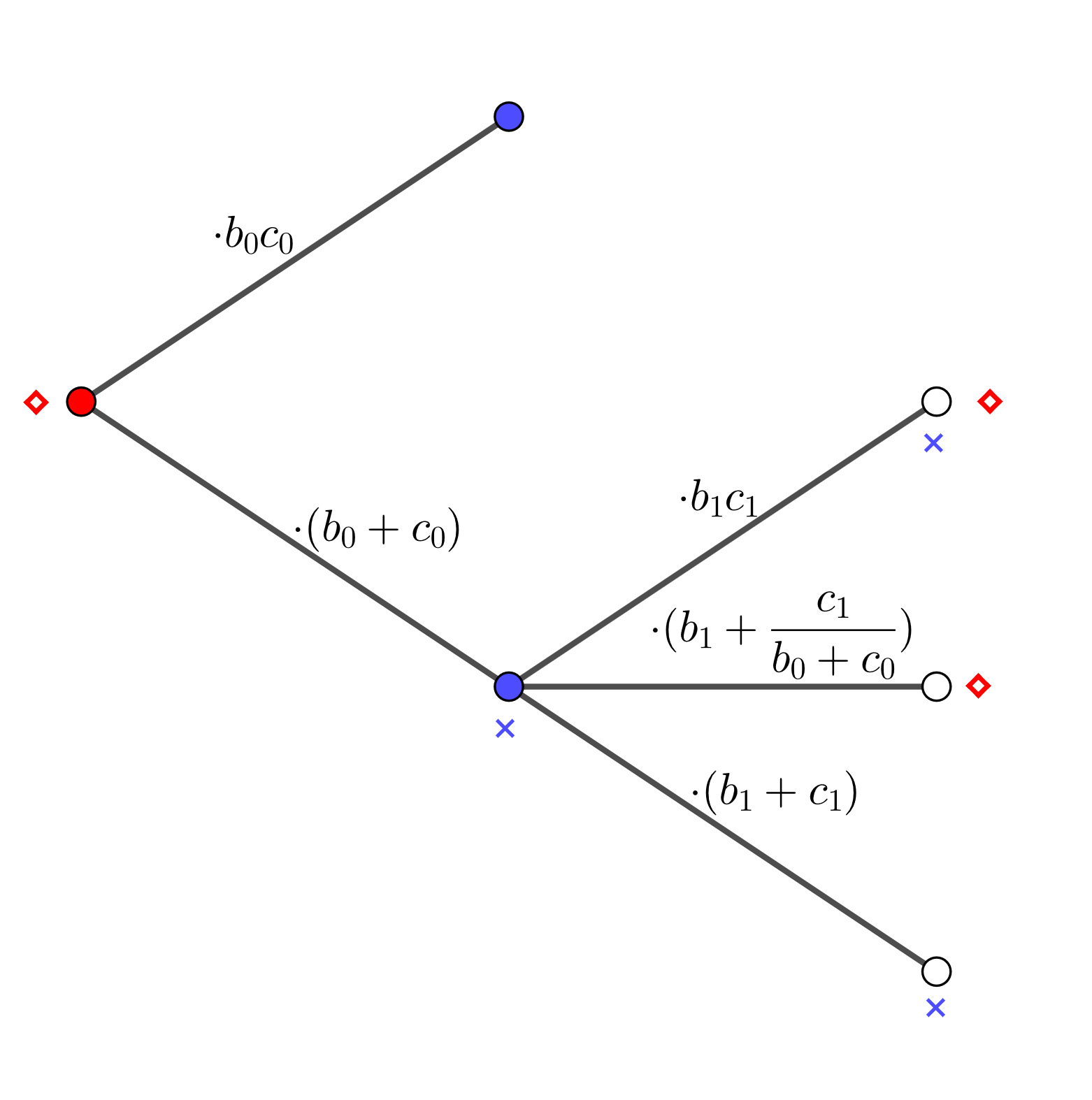}
    \caption{The color focusing tree from the proof of Proposition \ref{iterated 2}.  The uncolored vertices form a monochromatic set.  If they are blue we can finish the proof by setting $b=b_1(d_1)$ and $c=c_1(d_1)$ and considering the vertices marked with ${\color{blue}\times}.$  Otherwise they are red and we can finish by setting $b=(b_0(d_0)+c_0(d_0))\cdot b_1(d_1)$ and $c=c_1(d_1)$ and considering the vertices marked with ${\color{red} \diamondsuit}.$}
    \label{fig:focusing2}
\end{figure}

Without loss of generality we may assume that $A_\emptyset\subseteq C_0.$  $2$-coloring elements $p_0\in P_0$ based on the color of $A_{p_0},$ by Property (1) of Claim \ref{tree iterated 2} we find a monochromatic set $P_0'=\{b_0c_0,b_0+c_0\}\subset P_0$.  

If this set is monochromatic in $C_0$ then define $b=b_0(d_0), c=c_0(d_0),$ and $a\in A_\emptyset.$ We know $abc$ and $a(b+c)\in C_0$ by Property (4) of Claim \ref{tree iterated 2} and the fact that $P_0'$ is monochromatic.  Moreover, $a+b+c, a+bc\in A_\emptyset + P_0(d_0)\subseteq C_0$ by Property (3) of the Claim, so these are as desired.

Otherwise, we know $A_{(b_0+c_0)}\subseteq C_1.$  $2$-coloring elements $p_1\in P_1$ based on the color of $A_{(b_0+c_0)\smallfrown p_1},$ by Property (2) of Claim \ref{tree iterated 2} we find a monochromatic set $P_1'=\{b_1c_1,b_1+c_1, b_1+\frac{c_1}{b_0(d_0)+c_0(d_0)}\}\subset P_1.$  

If this set is monochromatic in $C_1$ then we can finish as above by setting $b=b_1(d_1),$ $c=c_1(d_1),$ and $a\in A_{b_0+c_0}$ and applying properties (3) and (4) of Claim \ref{tree iterated 2} as above.

Otherwise, $P_1'$ is monochromatic in $C_0.$  Now set $b=(b_0(d_0)+c_0(d_0))\cdot b_1(d_1),$ $c=c_1(d_1),$ and $a\in A_\emptyset.$  Then $abc\in C_0$ and $a(b+c)\in C_0$ by Property (4) of Claim \ref{tree iterated 2}.  Finally, observe that $$a+b+c=a+(b_0(d_0)+c_0(d_0))(b_1(d_1)+\frac{c_1(d_1)}{(b_0(d_0)+c_0(d_0))})\in A_\emptyset+P_0(d_0)P_1(d_1)\subseteq C_0$$ by Property (3) of Claim \ref{tree iterated 2}.  Similarly, $$a+bc=a+(b_0(d_0)+c_0(d_0))b_1(d_1)c_1(d_1)\in A_\emptyset+ P_0(d_0)P_1(d_1)\subseteq C_0,$$ completing the proof.   
\end{proof}

Before concluding this section and moving on to the proof of our general theorem, notice how in the proof of Proposition \ref{iterated 2} we needed to use a version of Moreira's theorem \ref{mor} for colorings of $\mathbb{P}_1.$  If we wanted to prove the $n=3$ case of Corollary \ref{cor iterated} we would need to prove a version of Proposition \ref{iterated 2} for colorings of $\mathbb{P}_1$ rather than $\mathbb{N}$ for a similar reason, and this in turn would require us to prove a version of Moreira's theorem for colorings of $\mathbb{P}_2.$  This will be the main source of technical difficulties in the next section, although it will lead to mostly notational rather than combinatorial problems.

\section{The general case}

In this section we prove Theorem \ref{thm:main}.  As mentioned previously, all of the combinatorial ideas needed for the proof are already present in the two concrete cases discussed in Section \ref{concrete}, so we suggest that the reader focus on understanding those proofs.  We only include the technical proofs in this section for the purpose of completeness and verification.

The proof will split into three independent cases depending on the structure of the family $\mathcal{B}$.  Throughout we will need to work in the space of many variabled rational polynomials for the reasons discussed after the proof of Proposition \ref{iterated 2}.  

\begin{proposition}\label{geo proof}
Let $\mathbb{P}_n=C_1\cup...\cup C_R$ be a finite coloring, $P\subset \mathbb{P}_{n+1}$ a finite set of polynomials, and $\mathcal{B}$ a geometric Ramsey family.  There is an infinite $A\subseteq \mathbb{P}_n$ and a $B\in\mathcal{B}$ such that $$\{a,ab,a+p(b): a\in A, b\in B, p\in P\}$$ is monochromatic.  
\end{proposition}

\begin{proof}

As in the special cases proved in Section \ref{concrete}, our goal is to build a color focusing tree.

\begin{claim}\label{geo}
There are finite sets of polynomials $P_0,...,P_R\subset \mathbb{P}_{n+1},$ elements $d_0,...,d_R\in \mathbb{P}_n,$ and a tree of infinite monochromatic sets $A_t$ for $t\in T=\bigcup_{0\leq i\leq R+1}\prod_{j<i}P_{j}$ such that

\begin{enumerate}
    \item any $R$-coloring of ${P}_{R}$ contains a monochromatic element of $\mathcal{B}.$
    \item for $0\leq i<R,$ any $R^{|P_{i+1}|\cdot...\cdot|P_R|+1}$-coloring of $P_i$ contains a monochromatic element of $\mathcal{B}.$
    \item  for each $0\leq i\leq R$ and each $t\in T_i$ there is a $c\in \{1,...,R\}$ such that $$A_t\cup\bigcup_{i\leq k\leq R}(A_t+P(\prod_{i\leq j\leq k}P_j(d_j)))\subseteq C_c.$$
    \item for $t\in T_i$ and $p_i\in P_i$ we have $p_i(d_i)A_t=A_{t\smallfrown p_i}.$
\end{enumerate}
\end{claim}

We can use Claim \ref{geo} to finish the proof in exactly the same manner as we finished the proof of Proposition \ref{geom2} (see especially Figure \ref{fig:focusing1}).  We include the details here more for the purpose of verification and completeness than understanding.

By coloring vertices $t\in T$ based on the color of $A_t$ and the color of all of their descendants in the tree and applying Properties (1) and (2) of Claim \ref{geo}, we find subsets $B_0\subset P_0,$ $\ldots,$ $B_R\subset P_R$ such that $B_k\in\mathcal{B}$ for $k\in \{0,...,R\}$ and such that for any sequence $b_0,b_1,\ldots, b_j$ with $b_k\in B_k$, the color of $A_{b_0\smallfrown...\smallfrown b_j}$ depends only on $j$ (and not on the choices of $b_k\in B_k$).  By the pigeonhole principle we find $i<j$ such that these colors agree on some color $c\in [R]$, i.e. the sets $A_{b_0\smallfrown...\smallfrown b_i}$ and $A_{b_0\smallfrown...\smallfrown b_j}$ are all subsets of $C_c$ for any choices of $b_k\in B_k$.  

Suppose that the sets $B_k\in\mathcal{B}$ for $k\in \{0,...,R\}$ all have cardinality $m+1$ and that $B_k$ can be written as $(b_{0,k},...,b_{m,k}),$ where the indexing is as in the definition of being a geometric Ramsey family, Definition \ref{def ramsey}.  Let $t=b_{0,0}\smallfrown...\smallfrown b_{0,i},$ where $i$ is the smaller index obtained via the pigeonhole principle in the previous paragraph.  To finish the proof we will define $A=A_t\subseteq C_c$ and for $0\leq k\leq m$ we define $b_k=b_{k,i+1}(d_{i+1})\cdot...\cdot b_{k,j}(d_j).$  Then since $\mathcal{B}$ is a geometric Ramsey family, we know $\{b_0,...,b_m\}=B\in \mathcal{B}$ by Observation \ref{obs tech}.  Moreover, by the previous paragraph and Property (4) of Claim \ref{geo} we know $ab_k\in C_c$ for any $b_k\in B$ and $a\in A.$  Finally, by Property (3) we know that $a+P(b_k)\subseteq C_c$ for each $a\in A$ and $b_k\in B,$ finishing the proof. 

\vspace{2mm}

Therefore, in order to complete the proof of Proposition \ref{geo proof} it suffices to prove Claim \ref{geo}.

\begin{proof}[Proof of Claim \ref{geo}]

All of the combinatorial ideas needed for the proof of the Claim were already in the proof of Claim \ref{geo simple claim}; the only new difficulty is the notation.   We will use several applications of the polynomial van der Waerden theorem.

First, we construct the polynomials $P_R,...,P_0\subset \mathbb{P}_{n+1}.$  By the definition of geometric Ramsey families and the compactness principle, we know that for any $k\in \mathbb{N}$ there is a finite set $F\subset \mathbb{P}_{n+1}$ such that any $k$-coloring of $F$ contains a monochromatic element of $\mathcal{B}.$  Let $P_R$ be such a finite set for $k=R,$ and having defined $P_R,P_{R-1},...,P_{i+1}$ let $P_i$ be such a set for $k=R^{|P_{i+1}|\cdot...\cdot|P_R|+1}.$  These satisfy Properties (1) and (2) of the claim.

For the other properties, suppose without loss of generality that $C_1$ is piecewise syndetic, and let $A'_{-1}=C_1.$  We inductively define a sequence of piecewise syndetic sets $A'_{-1}\supseteq A_0\supseteq A_0'\supseteq A_1\supseteq A_1'\supseteq... \supseteq A_{R}',$ elements $d_0,...,d_R\in \mathbb{P}_n,$ and finite sets of polynomials $P_0',...,P_R'\subset \mathbb{P}_{n+1}$ such that:

\begin{enumerate}
    \item[(a)] $$P_i'= \bigcup_{0\leq a\leq b\leq c<i} \frac{P(P_i\cdot\prod_{b\leq j\leq c} P_j(d_j))}{\prod_{ j' <a}P_{j'}(d_{j'})},$$ where empty products have value $1.$
    \item[(b)] $A_i=A_{i-1}'\cap\bigcap_{p'\in P_i'} (A_{i-1}'-p'(d_i)),$ where $d_i$ is chosen such that this set is piecewise syndetic.
    \item[(c)] Coloring $a\in A_i$ based on the tuple listing the colors of $a\cdot p_0(d_0)\cdot...\cdot p_i(d_i)$ for $p_j\in P_j,$ $A_i'$ is chosen to be a piecewise syndetic monochromatic subset of $A_i.$ 
\end{enumerate}

Finding sets satisfying (b) is possible by the polynomial van der Waerden theorem \ref{pvd}, and finding sets satisfying (c) is possible by Lemma \ref{lemma tech}.

Once these have been constructed, set $A_\emptyset=A_{R}'$ and define $A_t$ for $t\in T$ to satisfy (4) of Claim \ref{geo}.  Note that property (2) of the Claim follows from (b) and property (3) from (c).

\end{proof}

\end{proof}

The proofs of the other two cases use color focusing arguments similar to the one used in Proposition \ref{iterated 2} and Figure \ref{fig:focusing2}.

We start with the linear case, which is particularly simple.

\begin{proposition}\label{linear proof}
Let $\mathbb{P}_n=C_1\cup...\cup C_R$ be a finite coloring, $P\subset \mathbb{P}_{n+1}$ a finite set of polynomials, and $\mathcal{B}$ a linear Ramsey family.  There is an infinite $A\subseteq \mathbb{P}_n$ and a $B\in\mathcal{B}$ such that $$\{a,ab,a+p(b): a\in A, b\in B, p\in P\}$$ is monochromatic.  
\end{proposition}

\begin{proof}
As usual, we build a color focusing tree.

\begin{claim}\label{arith claim}
There are finite sets of polynomials $P_0,...,P_R\subset \mathbb{P}_{n+1},$ elements $d_0,...,d_R\in \mathbb{P}_n,$ and a tree of infinite monochromatic sets $A_t\subseteq \mathbb{P}_n$ for $t\in T=\bigcup_{0\leq i\leq R+1}\prod_{j<i}P_{j}$ such that

\begin{enumerate}
    \item for any $i\in \{0,...,R\}$, any $R$-coloring of ${P}_i$ contains a monochromatic element of $\mathcal{B}.$
    \item  for each $t\in T_i$ there is a $c\in \{1,...,R\}$ such that $$A_t\cup\bigcup_{i\leq k\leq R}(A_t+P(\prod_{i\leq j\leq k}P_j(d_j)))\subseteq C_c.$$
    \item for $t\in T_i$ and $p_i\in P_i$ we have $p_i(d_i)A_t=A_{t\smallfrown p_i}.$
\end{enumerate}
\end{claim}

We will omit the proof of Claim \ref{arith claim} as it follows exactly the same steps as the constructions in Claims \ref{geo} and \ref{geo simple claim} above.

To finish the proof, by Property (1) and (3) of Claim \ref{arith claim} we find subsets $B_i\subset P_i$ with $B_i\in \mathcal{B}$ and a sequence $b_0\in B_0,\ldots,b_R\in B_R$ such that for any $j\in \{0,\ldots, R\}$ the color of $A_{b_{0}\smallfrown b_1\smallfrown\ldots\smallfrown b_{j-1}\smallfrown b'_j}$ depends only on $j$ and not on the choice of $b'_j\in B_j$.  By the pigeonhole principle we find $i<j\in \{0,\ldots,R\}$ such that the colors of $A_{b_{0}\smallfrown...\smallfrown b_{i}}$ and $A_{b_{0}\smallfrown ...\smallfrown b_{j-1}\smallfrown b_{j}'}$ agree for any choice of $b_{j}'\in B_j$.

Let $b=\prod_{k=i+1}^{j-1}b_{k}(d_k).$  Then $bB_j\in \mathcal{B}$ since $B_j\in\mathcal{B}$ and $\mathcal{B}$ is linear. We are done by defining $A=A_{b_{0}\smallfrown...\smallfrown b_{i}},$ $B=bB_{j}(d_j),$ and applying Property (2) of Claim \ref{arith claim}. 

\end{proof}

We are now ready to prove the iterated case.

\begin{proposition}\label{prop iterated full}
Let $\mathcal{B}$ be a Ramsey family such that for any $n,m\in\mathbb{N}$ and any finite $P'\subset \mathbb{P}_{n+1}$ the family $$\mathcal{D}=\{\{xb, x+p'(b):b\in B, x\in X, p'\in P'\}: X\in \binom{\mathbb{P}_n}{m},B\in\mathcal{B}\}$$ is $\mathbb{P}_n$-Ramsey.

Then for any $n\in\mathbb{N}$ and any finite $P\subset \mathbb{P}_{n+1}$ the family 

$$\{\{ad, a+p(d):d\in D, a\in A, p\in P\}: A\in \binom{\mathbb{P}_n}{\infty},D\in\mathcal{D}\}$$ is $\mathbb{P}_n$-Ramsey.
\end{proposition}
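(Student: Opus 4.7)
Plan. The proof will closely follow the template of Propositions \ref{geo proof} and \ref{linear proof}: we build a color focusing tree of height $R+1$ and run a color focusing argument on it, the only real change being that the ``step family'' used at each level is the iterated family $\mathcal{C}$ instead of a geometric or linear Ramsey family.

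First I will construct, in the style of Claim \ref{geo}, finite sets of polynomials $P_0,\dots,P_R\subset \mathbb{P}_{n+1}$, elements $d_0,\dots,d_R\in\mathbb{P}_n$, and a tree of infinite piecewise syndetic monochromatic sets $A_t\subseteq\mathbb{P}_n$ for $t\in T=\bigcup_{0\le i\le R+1}\prod_{j<i}P_j$ with three properties. (i) For each $i$, any coloring of $P_i$ by a suitable large number of colors (of the form $R^{|P_{i+1}|\cdots|P_R|+1}$) contains a monochromatic element of $\mathcal{C}$, where the auxiliary polynomial set $P'_i$ used to build $\mathcal{C}$ consists of all elements $p\in P$ composed with the appropriate products $\prod P_j(d_j)$ from later levels, rescaled so that each expression lies in $\mathbb{P}_{n+1}$. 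The existence of such $P_i$ follows from the hypothesis that $\mathcal{C}$ is $\mathbb{P}_n$-Ramsey, combined with a standard compactness argument. (ii) For each $t\in T_i$ there is a color $r$ such that
\[ A_t \;\cup\; \bigcup_{i\le k\le R} \Bigl( A_t + P\Bigl(\prod_{i\le j\le k}P_j(d_j)\Bigr) \Bigr) \;\subseteq\; C_r. \]
(iii) For $t\in T_i$ and $p_i\in P_i$ we have $p_i(d_i)\cdot A_t = A_{t\smallfrown p_i}$. The sets $A_t$ and translates $d_i$ are built by the same inductive procedure as in Claim \ref{geo}, invoking the polynomial van der Waerden theorem (Theorem \ref{pvd}) and Lemma \ref{lemma tech} at each step.

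Given this tree, the color focusing step is a direct transcription of the one used in Proposition \ref{geo proof}. Color each $p_i\in P_i$ by the tuple of colors from (ii) for $A_{t\smallfrown p_i}$ and all of its descendants in the tree; property (i) then produces a monochromatic $C_i\subseteq P_i$ belonging to $\mathcal{C}$, say $C_i=\{yb,\, y+p'(b):y\in Y_i,\,b\in B_i,\,p'\in P'_i\}$ with $Y_i\in\binom{\mathbb{N}}{k}$ and $B_i\in\mathcal{B}$. By pigeonhole over the $R+1$ levels, two indices $i<j$ yield the same color $r$. Setting $X=A_t$ for the prefix $t$ at level $i$ (which is infinite) and assembling the $\mathcal{C}$-configuration at level $j$ together with the multiplicative scaling coming from the levels strictly between $i$ and $j$, one produces an infinite $X$ and a single $C\in\mathcal{C}$ such that $\{xc,\,x+p(c):x\in X,\,c\in C,\,p\in P\}\subseteq C_r$, finishing the proof.

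The main obstacle is the correct specification of the auxiliary sets $P'_i$ in (i). Because elements of $\mathcal{C}$ themselves already mix additions and multiplications, $P'_i$ must be rich enough to absorb not just $P$ but all compositions of $P$ with the later multiplicative factors (appropriately rescaled so that every expression is a well-formed element of $\mathbb{P}_{n+1}$), so that when one combines the $\mathcal{C}$-configurations from two matching levels the resulting object is itself an element of $\mathcal{C}$. This is the exact analogue of property (a) in the construction of Claim \ref{geo}, made somewhat heavier by the more intricate inner structure of $\mathcal{C}$. Once this bookkeeping is set up, each remaining step is a routine adaptation of the arguments already carried out for the geometric and linear cases.
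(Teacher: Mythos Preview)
Your plan follows the geometric template of Proposition~\ref{geo proof} too closely, and this is where the gap lies. The paper's own proof is explicitly ``a more technical version of the proof of Theorem~\ref{iterated 2}'', and the difference matters.

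In the geometric case the step family is closed under coordinatewise products, so one may color each $p_i\in P_i$ by the full tuple of descendant colors, find monochromatic $B_i\in\mathcal{B}$ at every level, and then multiply the $B_i$'s together across the levels between the two matching colors. The family $\mathcal{C}$ has no such closure: if $C_j=\{x_jb,\,x_j+p'(b)\}$ and you scale by a product $s$ of intermediate edge labels, you obtain $\{(sx_j)b,\,(sx_j)+sp'(b)\}$, which lies in $\mathcal{C}_{sP'}$, not in the original $\mathcal{C}$. Your fix is to enrich the inner polynomial set so that the scaling cancels, and that is the right idea --- but the enrichment must be by the \emph{reciprocals} of products of \emph{earlier} edge values $p_j(d_j)$ with $j<r$ (compare property~(2) of Claims~\ref{tree iterated 2} and~\ref{iterated full}), not ``later'' ones. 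This forces the interleaved construction order $P_0,d_0,P_1,d_1,\dots$, which is incompatible with the descendant--tuple coloring you propose, since that requires $P_i$ to handle $R^{|P_{i+1}|\cdots|P_R|+1}$ colors and hence to be built \emph{after} $P_{i+1},\dots,P_R$.

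The paper resolves this by abandoning the descendant--tuple coloring. At each level $r$ one uses only an $R$-coloring of $P_r$ (by the color of $A_{t\smallfrown p_r}$ for the \emph{current} prefix $t$), finds a monochromatic $B'_r$ in the division-enriched family, then picks a single distinguished element $x_r+b_r\in B'_r$ to extend the prefix, and repeats. Pigeonhole on the $R+1$ resulting colors gives $i<j$ that match; the final $C\in\mathcal{C}$ is the \emph{single} configuration $B'_j$ at level $j$, scaled by the path $(x_{i+1}+b_{i+1})(d_{i+1})\cdots(x_{j-1}+b_{j-1})(d_{j-1})$, with the division terms in $B'_j$ chosen precisely so that this scaled set lands back in $\mathcal{C}$. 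Your outline never isolates this distinguished path element, and your appeal to ``property~(a) in the construction of Claim~\ref{geo}'' conflates the van~der~Waerden auxiliary set (which governs which additive shifts $A_t$ absorbs) with the step-family polynomial set (which governs which monochromatic configuration you can extract); it is the latter that needs the division terms.
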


\begin{proof}
Fix $\mathcal{D},$ $\mathcal{B},$ and $P$ as above and consider a finite coloring $\mathbb{P}_n=C_1\cup...\cup C_R.$  We will assume that $P$ contains the $0$-polynomial $p(d)=0.$

The proof is a (notationally) more technical version of the proof of Proposition \ref{iterated 2} but follows from exactly the same logic.  We will make use of the following color focusing tree.

\begin{claim}\label{iterated full}
There are finite sets of polynomials $P_0,...,P_R\subset \mathbb{P}_{n+1},$ elements $d_0,...,d_R\in \mathbb{P}_n,$ and a tree of infinite monochromatic sets $A_t$ for $t\in T=\bigcup_{0\leq i\leq R+1}\prod_{j<i}P_{j}$ such that

\begin{enumerate}
    \item any $R$-coloring of ${P}_{0}$ contains a monochromatic element of $\mathcal{D}.$
    \item for $0\leq r\leq R$ and any $R$-coloring of $P_r,$ there is a $B_r\in\mathcal{B}$ and an $X_r\in \binom{\mathbb{P}_{n+1}}{m}$ such that 
    
    $$B'_r=\{x_rb_r, x_r+\frac{p(b_r)}{\prod_{i\leq j \leq k<r} P_j(d_j)}: x_r\in X_r, b_r\in B_r, p\in P, 0\leq i\leq k<r\}\subseteq P_r$$
    
    is monochromatic, where if $k=0$ the (empty) product in the denominator is $1$.
    
    \item  for each $t\in T_i$ there is a $c\in \{1,...,R\}$ such that $$A_t\cup\bigcup_{i\leq k\leq R}(A_t+P(\prod_{i\leq j\leq k}P_j(d_j)))\subseteq C_c.$$
    \item for $t\in T_i$ and $p_i\in P_i$ we have $p_i(d_i)A_t=A_{t\smallfrown p_i}.$
\end{enumerate}
\end{claim}

Again, we will omit the proof since it follows from only slight modifications of the proofs of Claims \ref{geo simple claim} and \ref{geo}.

The deduction of Proposition \ref{prop iterated full} from Claim \ref{iterated full} is just a natural generalization of the similar deduction in the proof of Proposition \ref{iterated 2}.     

Consider the sequence $B_0',...,B_R'$ of sets as in Properties (1) and (2) of Claim \ref{iterated full} and an arbitrary choice of elements $x_r\in B_r'$ for each $r\in \{0,\ldots,R\}$.  Notice that by Properties (3) and (4) of Claim \ref{iterated full} we know that the color of $A_{x_0\smallfrown x_1\smallfrown\ldots\smallfrown x_{r-1}\smallfrown b_r'}$ for $b'_r\in B_r'$ depends only on our choice of $r\in \{0,...,R\}$ (and not on the choice of $b'_r\in B_r'$).   

By the pigeonhole principle, we find $i<j$ such that these colors agree, i.e. there is some $c\in \{1,...,R\}$ with $A_{x_0\smallfrown...\smallfrown x_i}\subseteq C_c$ and $A_{x_0\smallfrown...\smallfrown x_{j-1}\smallfrown b_j'}\subseteq C_c$ for each $b_j'\in B_j'.$

To finish the proof, set $A=A_{x_0\smallfrown...\smallfrown x_i}$ and $$C'=x_{i+1}(d_{i+1})\cdot ...\cdot x_{j-1}(d_{j-1})\cdot B_j'(d_j).$$  By the definition of $B_j'$ in property (2) of Claim \ref{iterated full}, $C'$ contains a subset $D\in\mathcal{D}$.  For $a\in A$ and $d\in D$, we have that $ad\in C_c$ by the above paragraph, and $a+P(d)\subset C_c$ by property (3) of the Claim, completing the proof.
\end{proof}

Finally, Theorem \ref{thm:main} follows by combining Propositions \ref{geo proof}, \ref{linear proof}, and \ref{prop iterated full}.

\section{Open problems}

In addition to Question \ref{double vdw} and the problem of Kra, Moreira, Richter, and Robertson mentioned in the intro, there are several other natural ways to potentially extend this work.  For example, in a recent paper with Sabok \cite{bowen.sabok} we showed that any finite coloring of $\mathbb{Q}$ contains a monochromatic set of the form $\{a,b,ab,a+b\}.$  Can we prove a common generalization of this fact and Corollary \ref{cor iterated}?  The $3$ term case of this would be the following (recall that, given a set of numbers $A,$ by $FS(A)$ we mean the set of all finite non-repeating sums of elements of $A$ and by $FP(A)$ we mean the set of all non-repeating finite products):

\begin{question}
Does every finite coloring of $\mathbb{Q}$ contain a monochromatic set of the form $$FS(a,b,c)\cup FP(a,b,c)\cup (a+ FP(b,c))\cup (a\cdot FS(b,c))?$$
\end{question}



In another direction, Corollary \ref{cor iterated} shows that in any finite coloring of the naturals we can find $a_1,...,a_n$ such that no matter how we place $+$ and $\cdot$ between the terms, the resulting expression is the same color \textit{so long as the expression is calculated with respect to the increasing bracketing}.  

Is a similar Ramsey theorem still true if we allow for any bracketing instead of the increasing one?  The first open case of this is the following:

\begin{question}
Suppose the naturals are finitely colored.  Are there $a,b,c,d\in\mathbb{N}$ such that the set $$\{(a\circ_1 b)\circ_2(c\circ_3 d) : \circ_i\in \{+,\cdot\} \}$$

is monochromatic?
\end{question}






\section*{Acknowledgments} 
Thanks to Zach Hunter, Marcin Sabok, and the anonymous referee for many helpful comments and corrections on earlier versions of this paper.

\bibliographystyle{amsplain}




\begin{dajauthors}
\begin{authorinfo}[bowen]
  Matt Bowen\\
  Oxford University\\
  Oxford, United Kingdom\\
  matthew\imagedot{}bowen\imageat{}maths\imagedot{}ox\imagedot{}ac \\
  \url{https://sites.google.com/view/matt-bowen/home}
\end{authorinfo}
\end{dajauthors}

\end{document}